\DeclareMathAlphabet{\mathcal}{OMS}{cmsy}{m}{n} 
\newtheorem{theorem}{Theorem}
\newtheorem{proposition}{Proposition}
\newtheorem{lemma}{Lemma}
\newtheorem{corollary}{Corollary}
\DeclareMathOperator{\Der}{Der}
\DeclareMathOperator{\dcobound}{d}
\DeclareMathOperator{\GDer}{GDer}
\DeclareMathOperator{\Hom}{Hom}
\DeclareMathOperator{\id}{id}
\def\liebrack  {\ensuremath{[\,\cdot\, , \cdot\,]}}
\begin{document}

\title{On contact brackets on the tensor product}
\author{Pasha Zusmanovich}
\address{University of Ostrava, Czech Republic}
\email{pasha.zusmanovich@osu.cz}

\date{First written November 15, 2020; last minor revision November 12, 2022}
\thanks{Lin. Multilin. Algebra, to appear}

\begin{abstract}
We study the behavior of contact brackets on the tensor product of two algebras,
in particular, address the question of Mart\'inez and Zelmanov about extension
of a contact bracket on the tensor product from the brackets on the factors. 
\end{abstract}

\maketitle

\section{Introduction}

We start by recalling some stuff from a recent interesting survey 
\cite{martinez-zelm}. The standing assumptions are that the ground field $K$ is
of characteristic $\ne 2$, and all commutative associative algebras under 
consideration contain a unit $1$. Let $A$ be a commutative associative algebra 
over $K$. A bilinear map $\liebrack: A \times A \to A$ is called a 
\emph{bracket} on $A$. A bracket on $A$ is called \emph{contact}, if 
$(A, \liebrack)$ is a Lie algebra, and the identity
\begin{equation}\label{eq-abc}
[ab,c] = [a,c]b + [b,c]a + [c,1]ab
\end{equation}
holds for any $a,b,c \in A$ (the condition that the linear map $a \mapsto [a,1]$
is a derivation of $A$, included in the definition of the contact bracket in
\cite{martinez-zelm}, follows from the identity (\ref{eq-abc}); in some 
literature such structures appear under the names of \emph{Jacobi algebras}, or
\emph{generalized Poisson brackets}, or combinations or variations thereof; see,
for example, \cite[Chapter III, \S 5]{hermann}, \cite{ck},  \cite{kayg}, 
\cite[Chapter 5]{agore-m}, and references therein). In the particular case 
$[A,1] = 0$ this reduces to the classical notion of the \emph{Poisson bracket} 
on $A$. The following well known construction is a paradigmatic example of a 
contact bracket that is not a Poisson bracket: given a derivation $D$ of an
algebra $A$, consider the Lie bracket 
\begin{equation}\label{eq-ad}
[a,b] = D(a)b - D(b)a .
\end{equation}

The following question was posed as \cite[Question 1]{martinez-zelm}, and will 
be referred to in the sequel as the Mart\'inez--Zelmanov question: given two
commutative associative algebras $A$, $B$, a Poisson bracket $\liebrack_A$ on 
$A$, and a contact bracket $\liebrack_B$ on $B$, is it always possible to define
a contact bracket on the tensor product $A \otimes B$, extending the brackets 
$\liebrack_A$ and $\liebrack_B$? It is the purpose of the present note to answer
this question. The answer, in a sense, is both ``yes'' and ``no''. In general, the answer is negative, the corresponding 
example is constructed in \S \ref{sec-exam}. In a sense, this example is quite 
artificial and ``degenerate'' -- the resulting tensor product is isomorphic to 
the algebra $K[x,y,z]/(x^2,y^2,z^2)$; we also indicate how one can produce other
examples of a similar sort. On the other hand, for the most ``interesting'' and ``natural'' contact brackets appearing in mechanics, 
differential geometry, and structure theory of Lie algebras -- the contact 
brackets defined on polynomial algebras, or on reduced polynomial algebras in 
the case of positive characteristic -- the answer is affirmative; this is briefly
discussed in \S \ref{sec-pol}.

\section{Contact brackets that are not extended to the tensor product}
\label{sec-exam}

Rewrite the condition (\ref{eq-abc}) in the form
\begin{equation}\label{eq-yoyo}
[ab,c] = [a,c]b + [b,c]a - [1,c]ab ,
\end{equation}
and denote by $\mathcal K^-(A)$, respectively by $\mathcal K^+(A)$, the vector
space of all anticommutative, respectively commutative, brackets 
$\liebrack: A \times A \to A$ satisfying the condition (\ref{eq-yoyo}) (without
requiring them to satisfy the Jacobi identity or any other additional 
condition). (Note that while for anticommutative brackets the conditions
(\ref{eq-abc}) and (\ref{eq-yoyo}) are equivalent, for commutative brackets they
are not).

\begin{proposition}\label{prop-k}
Let $A$, $B$ be commutative associative algebras, one of them is 
finite-dimensional. Then there is an embedding of vector spaces
$$
\mathcal K^-(A \otimes B) \hookrightarrow
      \mathcal K^-(A) \otimes \mathcal K^+(B) 
\>+\> \mathcal K^+(A) \otimes \mathcal K^-(B) .
$$
More precisely, any bracket from $\mathcal K^-(A\otimes B)$ can be represented 
as a (finite) sum $\sum_{i\in \mathbb I} f_i \otimes g_i$, where for each 
$i\in \mathbb I$, either 
$f_i \in \mathcal K^-(A)$ and $g_i \in \mathcal K^+(A)$, or 
$f_i \in \mathcal K^+(A)$ and $g_i \in \mathcal K^-(A)$. Moreover, the equality
\begin{equation}\label{eq-22}
\sum_{i \in \mathbb I}
\Big(f_i(a,a^{\prime\prime})a^\prime - f_i(a^\prime,a^{\prime\prime})a\Big) 
\otimes 
\Big(g_i(b, b^{\prime\prime})b^\prime - g_i(b^\prime,b^{\prime\prime})b\Big)
= 0 .
\end{equation}
holds for any $a,a^\prime,a^{\prime\prime} \in A$, 
$b,b^\prime,b^{\prime\prime} \in B$.
\end{proposition}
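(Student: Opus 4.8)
The plan is to exploit the finite-dimensionality to decompose a bracket on $A\otimes B$ into a finite sum of ``separable'' brackets $f\otimes g$, acting by $(f\otimes g)(a\otimes b, a'\otimes b')=f(a,a')\otimes g(b,b')$, and then to read off what anticommutativity and the condition (\ref{eq-yoyo}) impose on the factors. Assume without loss of generality that $B$ is finite-dimensional. Fixing a basis of $B$ and expanding the $B$-component of the output in it, one obtains a natural isomorphism of the space of all brackets on $A\otimes B$ with $\mathrm{Bil}(A)\otimes\mathrm{Bil}(B)$, where $\mathrm{Bil}(\cdot)$ denotes the space of bilinear self-maps; thus every bracket $F$ on $A\otimes B$ is a finite sum $\sum_i f_i\otimes g_i$, and the asserted embedding will be injective automatically, being induced by this identification. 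Splitting each factor into symmetric and antisymmetric parts and using that the anticommutative part of $f\otimes g$ equals $f^{+}\otimes g^{-}+f^{-}\otimes g^{+}$, the anticommutativity of $F$ forces $F\in\mathrm{Bil}^{-}(A)\otimes\mathrm{Bil}^{+}(B)+\mathrm{Bil}^{+}(A)\otimes\mathrm{Bil}^{-}(B)$; here I also arrange, by collecting terms, that the $B$-factors occurring are linearly independent.

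The heart of the matter is to upgrade $\mathrm{Bil}^{\pm}$ to $\mathcal K^{\pm}$, i.e.\ to show that each surviving factor itself satisfies (\ref{eq-yoyo}). To this end I would write (\ref{eq-yoyo}) as the vanishing of the trilinear ``contact defect'' $\mathcal D(h)(x,y,z)=h(xy,z)-h(x,z)y-h(y,z)x+h(1,z)xy$, so that $h$ satisfies (\ref{eq-yoyo}) iff $\mathcal D(h)=0$, and $F\in\mathcal K^-(A\otimes B)$ iff $F$ is anticommutative with $\mathcal D(F)=0$. Substituting $F=\sum_i f_i\otimes g_i$, expanding both $f_i(a_1a_2,a_3)$ and $g_i(b_1b_2,b_3)$ by the same rule, and cancelling the three ``diagonal'' terms that reproduce $F(x,z)y+F(y,z)x-F(1,z)xy$, the identity $\mathcal D(F)=0$ becomes an explicit sum of thirteen tensor terms in the four building blocks $\mathcal D(f_i)$, $f_i(a_1,a_3)a_2$, $f_i(a_2,a_3)a_1$, $f_i(1,a_3)a_1a_2$ and their $B$-counterparts.

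The observation that organises this is that $\mathcal D(F)$ is automatically symmetric under the simultaneous swap $(a_1\otimes b_1)\leftrightarrow(a_2\otimes b_2)$; hence, decomposing with respect to the two independent transpositions $a_1\leftrightarrow a_2$ and $b_1\leftrightarrow b_2$, only the components symmetric in both pairs or antisymmetric in both pairs can survive, and $\mathcal D(F)=0$ splits into these two independent identities. A direct projection shows that the antisymmetric--antisymmetric component reduces to the single cross-term $\tfrac12\sum_i\big(f_i(a_1,a_3)a_2-f_i(a_2,a_3)a_1\big)\otimes\big(g_i(b_2,b_3)b_1-g_i(b_1,b_3)b_2\big)$, which up to a sign is exactly the left-hand side of (\ref{eq-22}); this is how the relation (\ref{eq-22}) arises, and it is the one piece of $\mathcal D(F)=0$ that does not separate into per-factor data. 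The symmetric--symmetric component is then the remaining identity, from which the per-factor conditions $\mathcal D(f_i)=0$ and $\mathcal D(g_i)=0$ are to be extracted.

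Extracting those per-factor conditions is the step I expect to be the main obstacle. The difficulty is that in the symmetric--symmetric identity the genuine defects $\mathcal D(f_i)$, $\mathcal D(g_i)$ appear entangled with lower-order blocks such as $f_i(1,a_3)a_1a_2$; indeed the specialization $b_1=b_2=b_3=1$ reduces a separable contact bracket $f\otimes g$ to $\big(\mathcal D(f)-2f(1,a_3)a_1a_2\big)\otimes g(1,1)$, so neither defect can be isolated naively. The way through is to feed several specializations (various arguments set equal to the unit $1$ of the respective factor) into the symmetric--symmetric identity and to invoke the linear independence of the chosen $B$-factors to strip off the $B$-side, solving the resulting linear system for $\mathcal D(f_i)$ and, symmetrically, for $\mathcal D(g_i)$; that an antisymmetric factor annihilates its value at $(1,1)$ is what keeps the system nondegenerate. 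Once $\mathcal D(f_i)=\mathcal D(g_i)=0$ is established we have $f_i\in\mathcal K^{\mp}(A)$ and $g_i\in\mathcal K^{\pm}(B)$, which is the asserted embedding, while (\ref{eq-22}) has already been obtained from the antisymmetric--antisymmetric part.
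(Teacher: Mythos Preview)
Your overall architecture is exactly the paper's: write the bracket as $\sum_i f_i\otimes g_i$ via finite-dimensionality, use anticommutativity to split the index set into ``$f_i$ anticommutative / $g_i$ commutative'' and vice versa, and then extract the two consequences of the contact identity~(\ref{eq-yoyo}). Your identification of~(\ref{eq-22}) as the part of $\mathcal D(F)=0$ that is antisymmetric in $a\leftrightarrow a'$ is correct and coincides with the paper's derivation (the paper says ``symmetrizing'' but means the alternating sum).

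The gap is precisely where you flag it. You propose to deduce $\mathcal D(f_i)=0$ by specializing $b$-variables to $1$ and then ``stripping off the $B$-side'' using linear independence of the $g_i$. The problem is that after specialization the $B$-side is no longer the family $\{g_i\}$ but the derived family $\{g_i(1,\cdot)\}$, and linear independence of the former does \emph{not} imply linear independence of the latter; several nonzero $g_i$ can share the same $g_i(1,\cdot)$, so this specialization alone only yields relations among the $\mathcal D(f_i)$, not their individual vanishing. The paper confronts exactly this obstruction: it couples the specialized equation
\[
\sum_i \mathcal D(f_i)(a,a',a'')\otimes g_i(1,b'')=0
\]
with the cross-relation~(\ref{eq-22}) and applies \cite[Lemma~1.1]{low} (a re-decomposition lemma allowing one to pass to a representation $\sum f_i\otimes g_i$ in which, for each $i$, one prescribed tensor factor vanishes) simultaneously to both, obtaining a four-part refinement $\mathbb I=\mathbb I_{11}\cup\mathbb I_{12}\cup\mathbb I_{21}\cup\mathbb I_{22}$. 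A short case analysis, using that $\varphi(x,z)y=\varphi(y,z)x$ on a unital algebra forces $\varphi(x,y)=\varphi(1,y)x$, then shows that $\mathcal D(f_i)=0$ in every case (and that the indices in $\mathbb I_{22}$ contribute nothing). Your ``solving the resulting linear system'' would, if carried out, reproduce this case split; but as written it is an intention, not an argument, and the specific mechanism that makes the system nondegenerate---the interplay between the two equations and the observation about $\varphi(x,z)y=\varphi(y,z)x$---is missing.
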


\begin{proof}
The proof utilizes a simple linear-algebraic technique employed by us earlier to
compute various structures on the tensor product of algebras in terms of the
tensor factors, see \cite{low} and \cite{compendium}. 

Due to the finite-dimensionality assumption, we have an isomorphism of vector
spaces 
$$
\Hom_K(A \otimes B\otimes A \otimes B, A \otimes B) \simeq
\Hom_K(A \otimes A, A) \otimes \Hom_K(B \otimes B, B) ,
$$
thus any bracket $\liebrack$ on $A\otimes B$ can be represented in the form
\begin{equation}\label{eq-br}
[a \otimes b, a^\prime \otimes b^\prime] = 
\sum_{i\in \mathbb I} f_i(a,a^\prime) \otimes g_i(b,b^\prime)
\end{equation}
for any $a, a^\prime \in A$, $b, b^\prime \in B$, where $f_i$ are some brackets
on $A$, $g_i$ are some brackets on $B$. The condition (\ref{eq-abc}) is then
equivalent to
\begin{multline}\label{eq-m}
\sum_{i \in \mathbb I}
f_i(aa^\prime, a^{\prime\prime}) \otimes g_i(bb^\prime, b^{\prime\prime}) 
- f_i(a,a^{\prime\prime})a^\prime \otimes g_i(b, b^{\prime\prime})b^\prime
- f_i(a^\prime,a^{\prime\prime})a \otimes g_i(b^\prime,b^{\prime\prime})b
\\
- f_i(a^{\prime\prime},1)aa^\prime \otimes g_i(b^{\prime\prime},1)bb^\prime
= 0
\end{multline}
for any $a,a^\prime,a^{\prime\prime} \in A$, 
$b,b^\prime,b^{\prime\prime} \in B$.

Substituting in (\ref{eq-m}) $b = b^\prime = 1$ yields:
\begin{equation}\label{eq-m1}
\sum_{i \in \mathbb I}
\Big(  f_i(aa^\prime, a^{\prime\prime}) 
     - f_i(a,a^{\prime\prime})a^\prime 
     - f_i(a^\prime,a^{\prime\prime})a\Big) \otimes g_i(1,b^{\prime\prime})
- f_i(a^{\prime\prime},1)aa^\prime \otimes g_i(b^{\prime\prime},1)
= 0 .
\end{equation}

The condition of anticommutativity of $\liebrack$ is equivalent to
$$
\sum_{i\in \mathbb I} 
  f_i(a,a^\prime) \otimes g_i(b,b^\prime)
+ f_i(a^\prime,a) \otimes g_i(b^\prime,b)
= 0 .
$$

Symmetrizing this equality with respect to $a, a^\prime$ yields
\begin{equation}\label{eq-1}
\sum_{i\in \mathbb I} 
(f_i(a,a^\prime) - f_i(a^\prime,a)) \otimes (g_i(b,b^\prime) - g_i(b^\prime,b))
= 0\phantom{.}
\end{equation}
and
\begin{equation}\label{eq-2}
\sum_{i\in \mathbb I} 
(f_i(a,a^\prime) + f_i(a^\prime,a)) \otimes (g_i(b^\prime,b) + g_i(b,b^\prime))
= 0 .
\end{equation}

Applying to the last two equalities \cite[Lemma 1.1]{low}, and observing that
the condition of commutativity and anticommutativity of the same bilinear map
entails zero map (characteristic is not $2$), we may partition the index set
$\mathbb I = \mathbb I_1 \cup \mathbb I_2$ such that $f_i$ is anticommutative 
and $g_i$ is commutative for $i\in \mathbb I_1$, and $f_i$ is commutative and 
$g_i$ is anticommutative for $i \in \mathbb I_2$\footnote{
Of course, this is just a manifestation of the vector space isomorphism
$C^2(A \otimes B) \simeq C^2(A) \otimes S^2(B) + S^2(A) \otimes C^2(B)$, where
$C^2$ and $S^2$ denote the vector space of anticommutative and commutative
brackets on the corresponding algebra, respectively. However, we use below a 
similar reasoning in more complicated situations, where the argument based on
the above decomposition of $C^2(A \otimes B)$ is not enough.
}.

Using this partition, the equality (\ref{eq-m1}) can be rewritten as
\begin{equation}\label{eq-33}
\sum_{i \in \mathbb I}
\Big(  f_i(aa^\prime, a^{\prime\prime}) 
     - f_i(a,a^{\prime\prime})a^\prime 
     - f_i(a^\prime,a^{\prime\prime})a 
     + f_i(1,a^{\prime\prime})aa^\prime\Big) \otimes g_i(1,b^{\prime\prime})
= 0 .
\end{equation}

Symmetrizing the equality (\ref{eq-m}) with respect to $a, a^\prime$, we get the
equality (\ref{eq-22}).

Applying \cite[Lemma 1.1]{low} to the equalities (\ref{eq-33}) and 
(\ref{eq-22}), we get a partition
$\mathbb I = 
\mathbb I_{11} \cup \mathbb I_{12} \cup \mathbb I_{21} \cup \mathbb I_{22}$ such
that
\begin{gather*}
  f_i(aa^\prime, a^{\prime\prime}) 
- f_i(a,a^{\prime\prime})a^\prime 
- f_i(a^\prime,a^{\prime\prime})a 
+ f_i(1,a^{\prime\prime})aa^\prime = 0, \>
f_i(a,a^{\prime\prime})a^\prime = f_i(a^\prime,a^{\prime\prime})a 
\quad\text{if}\> i \in \mathbb I_{11}
\\
  f_i(aa^\prime, a^{\prime\prime}) 
- f_i(a,a^{\prime\prime})a^\prime 
- f_i(a^\prime,a^{\prime\prime})a 
+ f_i(1,a^{\prime\prime})aa^\prime = 0, \>
g_i(b, b^{\prime\prime})b^\prime = g_i(b^\prime,b^{\prime\prime})b
\quad\text{if } i \in \mathbb I_{12}
\\
f_i(a,a^{\prime\prime})a^\prime = f_i(a^\prime,a^{\prime\prime})a, \>
g_i(1,b^{\prime\prime}) = 0
\quad\text{if } i \in \mathbb I_{21}
\\
g_i(1,b^{\prime\prime}) = 0, \>
g_i(b, b^{\prime\prime})b^\prime = g_i(b^\prime,b^{\prime\prime})b
\quad\text{if } i \in \mathbb I_{22} .
\end{gather*}

Observe that the identity $\varphi(x,z)y = \varphi(y,z)x$ for a bracket 
$\varphi$ on an algebra with unit is equivalent to the identity
$\varphi(x,y) = \varphi(1,y)x$. Consequently, the second condition for the
brackets $f_i$ with $i\in \mathbb I_{11}$ implies the first one, $g_i$ with 
$i\in \mathbb I_{22}$ vanish, and thus the identity
\begin{equation}\label{eq-ka}
  f_i(aa^\prime, a^{\prime\prime}) 
- f_i(a,a^{\prime\prime})a^\prime 
- f_i(a^\prime,a^{\prime\prime})a 
+ f_i(1,a^{\prime\prime})aa^\prime = 0
\end{equation}
holds for any $i\in \mathbb I$.

Similarly, the identity
\begin{equation}\label{eq-kb}
  g_i(bb^\prime, b^{\prime\prime}) 
- g_i(b,b^{\prime\prime})b^\prime 
- g_i(b^\prime,b^{\prime\prime})b
+ g_i(1,b^{\prime\prime})bb^\prime = 0
\end{equation}
also holds for any $i\in \mathbb I$.

Applying again \cite[Lemma 1.1]{low} to the equalities (\ref{eq-1}) and 
(\ref{eq-2}), we get again the partition 
$\mathbb I = \mathbb I_1 \cup \mathbb I_2$ such that $f_i$ is anticommutative 
and $g_i$ is commutative for $i\in \mathbb I_1$, and $f_i$ is commutative and 
$g_i$ is anticommutative for $i \in \mathbb I_2$, what, together with 
(\ref{eq-ka}) and (\ref{eq-kb}), gives decomposition of the bracket 
(\ref{eq-br}) exactly as in the statement of the theorem.
\end{proof}

One might be tempted to try to pursue these reasonings further in an attempt to
establish a formula expressing $\mathcal K^-(A\otimes B)$ in terms of certain
invariants of $A$ and $B$, similarly how it is done for various structures on 
the tensor product of two algebras in \cite{low} and \cite{compendium}. However,
this does not seem to be possible. Indeed, for this approach to succeed, all the
brackets from $\mathcal K^-(A \otimes B)$ should be, at the end, representable 
as the sum of decomposable ones, i.e., the brackets of the form $f \otimes g$, where $f$ is
a bracket on $A$, and $g$ is a bracket on $B$; but a glance, for example, at the
brackets of type (\ref{eq-ad}), or the more complicated brackets defining the 
simple Lie algebras of contact type (both infinite dimensional, and finite 
dimensional in positive characteristic; for the most general brackets of this 
type, see \S \ref{sec-pol}), defies such a possibility. In some particular 
cases, however, it is possible to get such exact formulas -- for example, in 
Proposition \ref{prop-x2} below. Note also that if we assume in (\ref{eq-yoyo})
$[1,A] = 0$ (thus getting the condition defining Poisson brackets), the 
situation becomes much more tractable, even without assuming commutativity of 
algebras and anticommutativity of the bracket; see, for example, \cite{eremita}
for a sample of possible results in this direction.

\medskip

Now we start to construct an example providing a negative answer to the
Mart\'inez--Zelmanov question. At the end, our example turns out to be 
$K[x,y,z]/(x^2,y^2,z^2)$, a quite trivial $8$-dimensional algebra; the fact that
it provides a negative answer to the question could be established by trivial, 
if a bit tedious, calculations. However, we choose to establish it as a 
corollary of intermediate statements formulated in a greater generality; this 
will allow us to understand better the structure of contact brackets on the 
tensor product in terms of the tensor factors, and opens possibilities to 
construct further examples and counter-examples of contact brackets with desired 
properties.

By \emph{generalized derivation} of a commutative associative algebra $A$, we 
will understand a linear map $D: A \to A$ such that 
\begin{equation}\label{eq-gd}
D(ab) = D(a)b + D(b)a - D(1)ab
\end{equation}
for any $a,b \in A$. (Generalized derivations of associative rings not 
necessarily commutative, and not necessarily having a unit -- in the latter case
$D(1)$ in the formula (\ref{eq-gd}) is replaced by an arbitrary fixed element of
the ring -- were studied in a number of papers, see, for example, 
\cite{nakajima}).

The vector space of all generalized derivations of $A$ will
be denoted by $\GDer(A)$. Particular cases of generalized derivations are the 
usual derivations (with $D(1) = 0$), and multiplications $R_u(a) = au$ on a 
fixed element $u \in A$. Thus we always have an inclusion of vector spaces
\begin{equation}\label{eq-gder}
\Der(A) \oplus A \subseteq \GDer(A) ,
\end{equation}
where $\Der(A)$ denotes the vector space (actually, a Lie algebra) of (ordinary)
derivations of $A$, and the second direct summand at the left-hand side, $A$, is
identified with the vector space of all multiplications by elements of $A$, via 
$u \leftrightarrow R_u$.

A commutative associative algebra $A$ will be called \emph{univariate-like}, if
it satisfies the following conditions:
\begin{enumerate}[\upshape(a)]
\item 
The inclusion 
(\ref{eq-gder}) is an equality, i.e., each generalized derivation of $A$ is a 
sum of a derivation and a multiplication by an element of $A$;

\item
$\Der(A)$ is an one-generated free $I_A$-module for some ideal $I_A$ of $A$, 
i.e., $\Der(A) = I_A\partial_A$ for some linear map $\partial_A: A \to A$ (note
that $\partial_A$ does not have to be a derivation of $A$);

\item
The preimage of $1$ under the free generator $\partial_A$ is not empty.
\end{enumerate}

Paradigmatic examples of univariate-like algebras are provided by the following

\begin{lemma}\label{l-kx}
The algebras $K[x]$ and $K[x]/(x^n)$ for any $n\in \mathbb N$, are 
univariate-like.
\end{lemma}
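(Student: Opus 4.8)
The plan is to check the three conditions (a)--(c) for the two families separately, after first observing that (a) holds automatically. Indeed, for \emph{any} commutative associative unital algebra $A$ and any generalized derivation $D$, the multiplication $R_c$ by $c := D(1)$ is itself a generalized derivation (immediate from (\ref{eq-gd})), so $D - R_c$ is a generalized derivation killing $1$ and hence, again by (\ref{eq-gd}), an ordinary derivation; as $R_u$ is a derivation only when $u = R_u(1) = 0$, the inclusion (\ref{eq-gder}) is always an equality. Thus the whole content of the lemma resides in conditions (b) and (c).

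For $A = K[x]$ the verification is classical: a derivation is determined by its value on $x$, giving $\Der(A) = A\,\partial_A$ with $\partial_A = \frac{d}{dx}$ and $I_A = A$; the module is free of rank one since $g\,\partial_A = 0$ forces $g = (g\,\partial_A)(x) = 0$, and $\partial_A(x) = 1$ takes care of (c).

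The substantive case is $A = K[x]/(x^n)$ (with $n \geq 2$; the case $n = 1$ reduces to $A = K$, where $\Der(A) = 0$ and one may take $I_A = 0$, $\partial_A = \id$). Using the monomial basis $1, x, \dots, x^{n-1}$, a derivation $D$ is again determined by $u := D(x)$, but consistency now requires $0 = D(x^n) = n x^{n-1} u$, which in $A$ amounts to $n u_0 = 0$ in $K$, where $u_0$ is the constant term of $u$. I would take $\partial_A$ to be the formal derivative $x^k \mapsto k x^{k-1}$, defined on the basis; the key point --- and the reason condition (b) is phrased so as to permit a generator that need not be a derivation --- is that $\partial_A$ is \emph{not} a derivation of $A$, failing the Leibniz rule in top degree, where $\partial_A(x \cdot x^{n-1}) = 0$ but $\partial_A(x)\,x^{n-1} + x\,\partial_A(x^{n-1}) = n x^{n-1}$. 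A short computation on the basis shows that $c\,\partial_A$ sends $x^k$ to $k x^{k-1}c$ and is a genuine derivation exactly when $n c_0 = 0$ in $K$; I would therefore set $I_A = (x)$ if $n \neq 0$ in $K$ and $I_A = A$ if $n = 0$ in $K$. In either case $\Der(A) = I_A\,\partial_A$, the assignment $c \mapsto c\,\partial_A$ is injective because $(c\,\partial_A)(x) = c$ (so the module is free of rank one), and $\partial_A(x) = 1$ settles (c).

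The one place where genuine care is needed --- and the main obstacle --- is the behaviour in characteristic $p$ with $n = 0$ in $K$: there the constraint $n u_0 = 0$ becomes vacuous, the dimension of $\Der(A)$ jumps (the formal derivative $\partial_A$ now being an honest derivation, namely $1 \cdot \partial_A$), and the ideal $I_A$ must correspondingly be enlarged from $(x)$ to all of $A$. Once this case distinction is made and the deliberate non-derivation nature of $\partial_A$ is kept in mind, the remaining verifications are routine checks on the monomial basis.
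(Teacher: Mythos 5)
Your proposal is correct and is precisely the ``straightforward calculation'' that the paper's proof declines to write out: the automatic equality in (\ref{eq-gder}), the identification $\Der(A)=I_A\partial_A$ with $I_A=(x)$ or $I_A=A$ according to whether $n\ne 0$ or $n=0$ in $K$, and the case distinction match the paper's parenthetical remark about the characteristic. The explicit observation that condition (a) holds for every unital commutative algebra, and the separate treatment of $n=1$ (where the formal derivative would violate condition (c)), are welcome details not spelled out in the paper.
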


\begin{proof}
This is well known (and could be established by straightforward calculations). 
Note that the properties of the algebra $K[x]/(x^n)$ are quite different 
depending on whether $n$ is divided by the characteristic of the ground field or
not (in the latter case each derivation is of the form 
$f \frac{\dcobound}{\dcobound x}$, where $f \in xK[x]/(x^n)$), but the conclusion is valid in both cases.
\end{proof}

\begin{lemma}\label{prop-der}
Let $A$ be an univariate-like commutative associative algebra. Then there are 
the following isomorphisms of vector spaces:
\begin{enumerate}[\upshape(i)]
\item
$\mathcal K^-(A) \simeq \Der(A) \simeq I_A$; each bracket from $\mathcal K^-(A)$
is of the form (\ref{eq-ad}), i.e.,
$$
[a,b] = u\big(a\partial_A(b) - b\partial_A(a)\big)
$$
for some $u\in I_A$.

\item 
$\mathcal K^+(A) \simeq I_A \oplus \GDer(A) \simeq I_A \oplus I_A \oplus A$; 
each bracket from $\mathcal K^+(A)$ is of the form
$$
[a,b] = u\partial_A(a)\partial_A(b) + v\partial_A(ab) + abw ,
$$
for some $u,v \in I_A$, $w \in A$.
\end{enumerate}
\end{lemma}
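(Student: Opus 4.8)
The plan is to treat both parts by the same device: for a bracket $\liebrack$ in $\mathcal K^{\pm}(A)$ and a fixed $c$, the map $D_c\colon a\mapsto [a,c]$ is a generalized derivation of $A$. Indeed, reading (\ref{eq-yoyo}) (equivalently (\ref{eq-abc}) in the anticommutative case) as $D_c(ab)=D_c(a)b+D_c(b)a-[1,c]ab$ and noting that $[1,c]=D_c(1)$ turns it into the defining identity (\ref{eq-gd}). By the univariate-like hypotheses (a) and (b) I may therefore write $[a,c]=s_c\partial_A(a)+w_c a$ with $s_c\in I_A$ and $w_c=[1,c]\in A$ depending linearly on $c$; here I use that $s_c\partial_A$ is a genuine derivation, so $s_c\partial_A(1)=0$, which makes the decomposition $D_c=s_c\partial_A+R_{w_c}$ consistent. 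Throughout I fix, using hypothesis (c), an element $t$ with $\partial_A(t)=1$; evaluating linear identities at $a=t$ is the main computational lever, together with the fact that $p\partial_A\in\Der(A)$ obeys the Leibniz rule for every $p\in I_A$ (whereas $\partial_A$ alone need not).

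For part (i), anticommutativity gives $[1,1]=0$ and shows that $c\mapsto[c,1]=-w_c$ is a derivation by (\ref{eq-abc}); hence $w_c=p\partial_A(c)$ for a fixed $p\in I_A$. Writing out $[a,c]+[c,a]=0$ with this information and collecting the coefficients of $\partial_A(a)$ and $\partial_A(c)$, the element $s_c+pc$ satisfies $\partial_A(a)(s_c+pc)+\partial_A(c)(s_a+pa)=0$; evaluating at $a=c$ and then at $a=t$ forces $s_c=-pc$, so the bracket collapses to $[a,b]=p\big(a\partial_A(b)-b\partial_A(a)\big)$. Since this equals $aD(b)-bD(a)$ with $D=p\partial_A\in\Der(A)$, it is a bracket of the form (\ref{eq-ad}) and so indeed lies in $\mathcal K^-(A)$, giving the converse inclusion. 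The maps $p\mapsto D=p\partial_A\mapsto\liebrack$ are linear and surjective onto $\mathcal K^-(A)$; injectivity of $D\mapsto\big((a,b)\mapsto aD(b)-bD(a)\big)$ follows by setting $b=1$ (then $aD(1)=D(a)$ forces $D=0$), and injectivity of $p\mapsto p\partial_A$ is the freeness in (b). This yields $\mathcal K^-(A)\simeq\Der(A)\simeq I_A$.

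For part (ii) I first pin down $w_c=[1,c]$. Setting the third argument equal to $1$ in the defining identity and using commutativity gives $w_{bc}=w_b c+w_c b-w_1 bc$, i.e. $c\mapsto w_c$ is itself a generalized derivation; by (a) and (b) it has the form $w_c=v\partial_A(c)+wc$ with $v\in I_A$, $w\in A$. Commutativity $[a,c]=[c,a]$, evaluated at $a=t$, then solves $s_c=s_t\partial_A(c)+w_t c-w_c t=s_t\partial_A(c)+v\,\eta(c)$ with $\eta(c)=c-t\partial_A(c)$; feeding this back into $[a,c]=s_c\partial_A(a)+w_c a$, all $t$-dependence should organize into the combination $v\big(c\partial_A(a)+a\partial_A(c)\big)=v\,\partial_A(ac)$ (Leibniz for the derivation $v\partial_A$), leaving
\[
[a,c]=u\,\partial_A(a)\partial_A(c)+v\,\partial_A(ac)+w\,ac,\qquad u=s_t-vt\in I_A,\ v\in I_A,\ w\in A,
\]
exactly the asserted form. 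A final check shows the full commutativity constraint is then satisfied identically (the surviving coefficient of $c\partial_A(a)-a\partial_A(c)$ is $w_t-tw-v$, which vanishes since $w_t=v+wt$), so no further relations appear. The converse inclusion is three short verifications of (\ref{eq-yoyo}) for the summands $u\partial_A(a)\partial_A(b)$, $v\partial_A(ab)$, $wab$, each using that $u\partial_A$ and $v\partial_A$ obey the Leibniz rule and kill $1$. Injectivity of $(u,v,w)\mapsto\liebrack$ comes from evaluating the bracket at $(1,1)$, at $(a,1)$, and at $(t,t)$, which peel off $w$, then $v$ (using freeness), then $u$ in turn; identifying the pair $(v,w)$ with the generalized derivation $c\mapsto[1,c]$ gives $\mathcal K^+(A)\simeq I_A\oplus\GDer(A)\simeq I_A\oplus I_A\oplus A$.

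The main obstacle is the bookkeeping in part (ii): one must keep straight that $\partial_A$ is not a derivation while $I_A\partial_A\subseteq\Der(A)$ is, and repeatedly exploit $\partial_A(t)=1$ to turn abstract linear maps into explicit coefficients. The delicate point is verifying that, after substituting the solved values of $s_c$ and $w_c$, the remaining commutativity identity degenerates to $0=0$, so that $\mathcal K^+(A)$ is genuinely parametrized by the triple $(u,v,w)$ with no hidden constraints.
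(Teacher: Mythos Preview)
Your proof is correct and follows essentially the same strategy as the paper's: write $a\mapsto[a,c]$ as a generalized derivation $s_c\partial_A+R_{w_c}$, then use (anti)commutativity together with evaluation at an element $t$ with $\partial_A(t)=1$ to pin down the coefficients. The only organisational difference is that you first identify $c\mapsto w_c$ as a (generalized) derivation via the identity at $c=1$, whereas the paper extracts $\psi(c)=w_c$ directly from the (anti)commutativity relation; you are also more explicit about the converse inclusions and the injectivity of $(u)\mapsto\liebrack$ and $(u,v,w)\mapsto\liebrack$, which the paper leaves implicit.
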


\begin{proof}
Let $\liebrack$ be a bracket on $A$ satisfying the identity (\ref{eq-yoyo}). 
Then for each $c \in A$, the linear map $a \mapsto [a,c]$ is a generalized 
derivation of $A$. Hence 
$$
[a,c] = \varphi(c)\partial(a) + a\psi(c)
$$
for some maps $\varphi: A \to I_A$, $\psi: A \to A$, and any $a,c\in A$. 
Obviously, $\varphi, \psi$ can be chosen to be linear.

Also, according to the condition (c), fix $x\in A$ such that 
$\partial_A(x) = 1$.

(i)
If $\liebrack$ is anticommutative, then 
$$
\varphi(c)\partial_A(a) + a\psi(c) + \varphi(a)\partial_A(c) + c\psi(a) = 0 .
$$
Substituting here $1$'s yields $\psi(a) = -\varphi(1)\partial_A(a)$, and hence
$$
  \big(\varphi(c) - c\varphi(1)\big)\partial_A(a) 
+ \big(\varphi(a) - a\varphi(1)\big)\partial_A(c) = 0
$$
for any $a,c \in A$. Substituting here $x$ instead of $a$ and $c$, we get 
$\varphi(x) = x\varphi(1)$, $\varphi(a) = a\varphi(1)$, and, finally,
$$
[a,c] = \varphi(1)\big(c\partial_A(a) - a\partial_A(c)\big) .
$$

(ii) 
If $\liebrack$ is commutative, then 
$$
\varphi(c)\partial_A(a) + a\psi(c) = \varphi(a)\partial_A(c) + c\psi(a) .
$$
Substituting here $c=1$ yields $\psi(a) = \varphi(1)\partial_A(a) + a\psi(1)$, 
and hence
$$
\big(\varphi(c) - c\varphi(1)\big)\partial_A(a) =
\big(\varphi(a) - a\varphi(1)\big)\partial_A(c)
$$
for any $a,c \in A$. Substituting here $c=x$ yields
$$
\varphi(a) = \big(\varphi(x) - x\varphi(1)\big)\partial_A(a) + a\varphi(1) ,
$$
and hence
$$
[a,c] = (\varphi(x) - x\varphi(1))\partial_A(a)\partial_A(c) 
+ \varphi(1)\partial_A(ac) + ac\psi(1) .
$$
\end{proof}

\begin{proposition}\label{prop-x2}
For any commutative associative algebra $A$, 
$$
\mathcal K^-\big(A \otimes K[x]/(x^2)\big) \simeq 
\mathcal K^-(A) \oplus \mathcal K^-(A) \oplus \Der(A) \oplus A .
$$
Each bracket from $\mathcal K^-\big(A \otimes K[x]/(x^2)\big)$ is of the form
\begin{alignat}{3}\label{eq-1ab}
&[a \otimes 1, b \otimes 1] \>&=&\> \alpha(a,b) \otimes 1 + \beta(a,b) \otimes x
\notag \\
&[a \otimes 1, b \otimes x] \>&=&\> 
                                   \big(\alpha(a,b) + bD(a) + abu\big) \otimes x
\\
&[a \otimes x, b \otimes x] \>&=&\> 0 
\notag
\end{alignat}
for any $a,b \in A$, where $\alpha, \beta \in \mathcal K^-(A)$, $D \in \Der(A)$,
$u\in A$.
\end{proposition}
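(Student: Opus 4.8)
The plan is to reduce everything to the second factor $B = K[x]/(x^2)$, for which the spaces of contact brackets are completely understood, and then transport this information to $A \otimes B$ through Proposition \ref{prop-k}. By Lemma \ref{l-kx} the algebra $B$ is univariate-like, with $I_B = xB$ one-dimensional and $\partial_B = \dcobound/\dcobound x$, so Lemma \ref{prop-der} applies: $\mathcal K^-(B) \simeq I_B$ is spanned by the single bracket $\gamma$ of type (\ref{eq-ad}) determined by $\gamma(1,x) = x$ (all other values on the basis $\{1,x\}$ being zero), and $\mathcal K^+(B) \simeq I_B \oplus I_B \oplus B$ is four-dimensional. A direct inspection of the formula in Lemma \ref{prop-der}(ii) shows that every $g \in \mathcal K^+(B)$ sends both $(1,x)$ and $(x,x)$ into $xB$, so I may fix a basis $E_1, E_2, E_3, E_4$ of $\mathcal K^+(B)$ whose only nonzero values on $\{1,x\}$ are $E_1(1,1) = 1$, $E_2(1,1) = x$, $E_3(1,x) = E_3(x,1) = x$, and $E_4(x,x) = x$. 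Since $B$ is finite-dimensional, Proposition \ref{prop-k} lets me write any bracket from $\mathcal K^-(A \otimes B)$ as $F \otimes \gamma + \sum_{j=1}^4 G_j \otimes E_j$ with $F \in \mathcal K^+(A)$ and $G_1, \dots, G_4 \in \mathcal K^-(A)$.

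Evaluating this expression on the generators $1, x$ of $B$ collapses it to the three brackets $[a \otimes 1, b \otimes 1] = G_1(a,b) \otimes 1 + G_2(a,b) \otimes x$, $[a \otimes 1, b \otimes x] = (F+G_3)(a,b) \otimes x$, and $[a \otimes x, b \otimes x] = G_4(a,b) \otimes x$; note that the $\otimes 1$-components of the last two vanish automatically, thanks to the chosen basis. I set $\alpha := G_1$, $\beta := G_2 \in \mathcal K^-(A)$ and $H := F + G_3$, and extract the remaining data from the contact identity. These constraints come from imposing (\ref{eq-abc}) on triples of generators, where the relation $x^2 = 0$ kills most terms. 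Feeding $(a \otimes x, a' \otimes 1, a'' \otimes x)$ into (\ref{eq-abc}) leaves a single surviving term and yields $G_4(aa', a'') = G_4(a,a'')a'$; together with the anticommutativity of $G_4$ and $\mathrm{char}\,K \ne 2$ this forces $G_4 = 0$, i.e. $[a \otimes x, b \otimes x] = 0$. Feeding $(a \otimes 1, a' \otimes x, a'' \otimes 1)$ and then specializing $a' = 1$ gives $H(a,b) = \alpha(a,b) + w(a)b$, where $w(a) := H(a,1) - \alpha(a,1)$, while $(a \otimes 1, a' \otimes 1, a'' \otimes x)$ shows that $a \mapsto H(a,c)$ is a generalized derivation of $A$ for each fixed $c$; substituting the former expression into the latter and using that $\alpha$ itself satisfies (\ref{eq-abc}) reduces everything to $w(ab) = w(a)b + w(b)a - w(1)ab$, i.e. $w \in \GDer(A)$.

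To finish, I split the generalized derivation $w$: putting $u := w(1) \in A$ and $D := w - R_u$, a one-line computation with (\ref{eq-gd}) shows $D \in \Der(A)$, so $w = D + R_u$ and hence $H(a,b) = \alpha(a,b) + bD(a) + abu$, which is precisely (\ref{eq-1ab}). This defines a linear map $\mathcal K^-(A \otimes B) \to \mathcal K^-(A) \oplus \mathcal K^-(A) \oplus \Der(A) \oplus A$ sending the bracket to $(\alpha, \beta, D, u)$; it is injective because $\alpha, \beta$ are the components of $[\,a \otimes 1, b \otimes 1\,]$, while $u = w(1) = H(1,1)$ and $D = w - R_u$ are recovered from $[\,a \otimes 1, b \otimes x\,]$. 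Surjectivity is the converse check: for arbitrary $\alpha, \beta \in \mathcal K^-(A)$, $D \in \Der(A)$, $u \in A$ the formulas (\ref{eq-1ab}) define an anticommutative bracket, and one verifies (\ref{eq-abc}) on each of the eight generator triples — the three genuine identities run backwards through the argument above, and every triple containing two or three factors of $x$ holds trivially because $x^2 = 0$ and $G_4 = 0$. Hence the map is an isomorphism of vector spaces.

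I expect the main labor to be the bookkeeping of (\ref{eq-abc}) across the eight generator triples, keeping the $\otimes 1$ and $\otimes x$ parts separate while exploiting $x^2 = 0$; the two genuinely delicate points are the vanishing $G_4 = 0$ and the identification of $w$ as a generalized derivation that then splits into $D$ and $u$. Invoking Proposition \ref{prop-k} at the outset is what keeps this manageable: it supplies the anticommutativity and the memberships $F \in \mathcal K^+(A)$, $G_j \in \mathcal K^-(A)$ for free, so that only the mixed relations coupling the two tensor factors remain to be untangled.
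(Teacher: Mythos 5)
Your proof is correct and takes essentially the same route as the paper's: both invoke Lemmas \ref{l-kx} and \ref{prop-der} to fix a basis of $\mathcal K^{\pm}\big(K[x]/(x^2)\big)$, use Proposition \ref{prop-k} to decompose the bracket over that basis, and then impose the contact identity on generator triples to pin down the mixed component and kill the $(x,x)$-component. The only differences are organizational (you work with $H=F+G_3$ as a single object and split the resulting generalized derivation $w$ into $D+R_u$, where the paper manipulates $\chi$ and $\chi_2$ separately, and you use the triple $(a\otimes x,\,a'\otimes 1,\,a''\otimes x)$ to get $G_4=0$), and these checks all go through.
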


\begin{proof}
By Lemmas \ref{l-kx} and \ref{prop-der}, $\mathcal K^-\big(K[x]/(x^2)\big)$ is 
$1$-dimensional, linearly spanned by the bracket 
$$
\varphi(f,g) = 
x\big(f\frac{\dcobound}{\dcobound x}(g) - g\frac{\dcobound}{\dcobound x}(f)\big)
,
$$
and $\mathcal K^+\big(K[x]/(x^2)\big)$ is $4$-dimensional, with the basis 
$$
\varphi_1(f,g) = 
x\frac{\dcobound }{\dcobound x}(f)\frac{\dcobound}{\dcobound x}(g), \quad
\varphi_2(f,g) = x\frac{\dcobound}{\dcobound x}(fg),                \quad   
\varphi_3(f,g) = fg - x\frac{\dcobound}{\dcobound x}(fg),           \quad
\varphi_4(f,g) = xfg ,
$$
where $f,g \in K[x]/(x^2)$. We have:
$$
\varphi(1,x) = -\varphi(x,1) = x, \quad
\varphi_1(x,x) = x, \quad 
\varphi_2(1,x) = \varphi_2(x,1) = x, \quad
\varphi_3(1,1) = 1, \quad
\varphi_4(1,1) = x ,
$$
and the values on all other pairs of the monomials $1,x$, are zero.

By Proposition \ref{prop-k}, each bracket from 
$\mathcal K^-\big(A \otimes K[x]/(x^2)\big)$ can be represented as
$$
[a \otimes f, b \otimes g] = \chi(a,b) \otimes \varphi(f,g)+ 
\sum_{i=1}^4 \chi_i(a,b) \otimes \varphi_i(f,g)
$$
where $\chi \in \mathcal K^+(A)$, and $\chi_i \in \mathcal K^-(A)$, 
$i=1,2,3,4$. 

Writing the equality (\ref{eq-yoyo}) for triple $a \otimes 1$, $b \otimes x$, 
$c \otimes 1$, and utilizing the fact that $\chi \in \mathcal K^+(A)$, 
$\chi_2 \in \mathcal K^-(A)$, we get:
\begin{equation*}
- b\chi(a,c) + ab\chi(1,c)
+ b\chi_2(a,c) - ab\chi_2(1,c)
= b \chi_3(a,c) - ab \chi_3(1,c)
\end{equation*}
for any $a,b,c \in A$.

Substitute here $b=1$:
\begin{equation}\label{eq-ac}
- \chi(a,c) + a\chi(1,c)
+ \chi_2(a,c) - a\chi_2(1,c)
= \chi_3(a,c) - a \chi_3(1,c) .
\end{equation}

Substituting into (\ref{eq-ac}), in its turn, $c=1$, using skew-symmetry of 
$\chi_2$ and $\chi_3$, and substituting back the obtained equality into 
(\ref{eq-ac}), we get
\begin{equation}\label{eq-chi2}
\chi_2(a,c) = \chi_3(a,c) + \chi(a,c) - 2a\chi(1,c) + ac\chi(1,1) .
\end{equation}

Symmetrizing this equality with respect to $a,c$, we get
\begin{equation}\label{eq-chi}
\chi(a,c) = a\chi(1,c) + c\chi(1,a) - ac\chi(1,1) .
\end{equation}

Substitute (\ref{eq-chi}) back into (\ref{eq-chi2}):
$$
\chi_2(a,c) = \chi_3(a,c) + c\chi(1,a) - a\chi(1,c)  .
$$

Taking into account (\ref{eq-chi}), the condition $\chi \in \mathcal K^+(A)$ is
equivalent to
$$
\chi(1,ab) = b\chi(1,a) + a\chi(1,b) - ab\chi(1,1) ,
$$
i.e., $\chi(1,\cdot) \in \GDer(A)$.

Denoting $D(a) = 2\big(\chi(1,a) - \chi(1,1)a\big)$ and $u = \chi(1,1)$, the
obtained formulas can be rewritten as: $D \in \Der(A)$, and
\begin{alignat*}{3}
&\chi(a,b)   \>&=&\> \frac 12\big(aD(b) + bD(a)\big) + abu \\
&\chi_2(a,b) \>&=&\> \chi_3(a,b) + \frac 12\big(bD(a) - aD(b)\big) .
\end{alignat*}

Writing the equality (\ref{eq-yoyo}) for triple $1 \otimes 1$, $b \otimes x$, 
$c \otimes x$, we get $\chi_1(b,c) = 0$.

Summarizing, we get that the bracket $\liebrack$ is represented in the form 
(\ref{eq-1ab}) (where $\alpha = \chi_3$ and $\beta = \chi_4$). Conversely, it is
trivial to verify that each such skew-symmetric bracket lies in 
$\mathcal K^-\big(A \otimes K[x]/(x^2)\big)$.
\end{proof}

\begin{corollary}\label{cor-1}
For any commutative associative algebra $A$, each contact bracket on 
$A \otimes K[x]/(x^2)$ is of the form (\ref{eq-1ab}), subject to additional 
conditions
\begin{multline*}
  \beta(\alpha(a,b),c) + \beta(\alpha(b,c),a) + \beta(\alpha(c,a),b) 
+ \alpha(\beta(a,b),c) + \alpha(\beta(b,c),a) + \alpha(\beta(c,a),b) 
\\
- \beta(a,b)D(c) - \beta(b,c)D(a) - \beta(c,a)D(b) 
\\
- \big(c\beta(a,b) + a\beta(b,c) + b\beta(c,a)\big) u
= 0
\end{multline*}
and
\begin{multline*}
  D(\alpha(a,b)) 
- \alpha(D(a),b) + \alpha(D(b),a) + \alpha(1,b)D(a) - \alpha(1,a)D(b)
\\
+ \alpha(u,a)b - \alpha(u,b)a - \alpha(a,b)u + 2\alpha(1,b)au - 2\alpha(1,a)bu
= 0
\end{multline*}
for any $a,b,c \in A$.
\end{corollary}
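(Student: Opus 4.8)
The plan is to lean entirely on Proposition~\ref{prop-x2}, which already tells us that any bracket in $\mathcal K^-\big(A\otimes K[x]/(x^2)\big)$ has the shape (\ref{eq-1ab}) with $\alpha,\beta\in\mathcal K^-(A)$, $D\in\Der(A)$, $u\in A$, and is therefore automatically anticommutative and satisfies the contact identity (\ref{eq-abc}). Consequently a bracket of this shape is \emph{contact} if and only if it additionally obeys the Jacobi identity, and the entire content of the corollary is the translation of that single requirement into relations among $\alpha,\beta,D,u$. Since $K[x]/(x^2)$ is spanned by $1$ and $x$, trilinearity reduces the Jacobi identity to triples each of whose entries is of the form $a\otimes 1$ or $a\otimes x$.

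First I would observe that, by (\ref{eq-1ab}), the subspace $A\otimes x$ is an abelian ideal: $[A\otimes x,\,A\otimes x]=0$ and $[A\otimes 1,\,A\otimes x]\subseteq A\otimes x$. Hence for any triple two of whose entries lie in $A\otimes x$ the Jacobiator vanishes term by term. This kills all triples carrying two or three factors $\otimes x$, leaving exactly two families to examine: the all-$\otimes 1$ triples $(a\otimes 1,b\otimes 1,c\otimes 1)$, and the mixed triples with two entries $\otimes 1$ and one entry $\otimes x$ (whose three possible placements of the $\otimes x$ factor give the same relation, by the skew-symmetry of the Jacobiator of an anticommutative bracket).

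For the all-$\otimes 1$ triple I would expand $[[a\otimes 1,b\otimes 1],c\otimes 1]$ via (\ref{eq-1ab}) and split off the $A\otimes 1$ and $A\otimes x$ parts of the cyclic sum. The $A\otimes 1$ part is $\sum_{\mathrm{cyc}}\alpha(\alpha(a,b),c)$, i.e.\ the Jacobi identity for $\alpha$, so that $\alpha$ is forced to be a genuine contact bracket on $A$; the $A\otimes x$ part, after the single rewriting $\alpha(c,\beta(a,b))=-\alpha(\beta(a,b),c)$ coming from anticommutativity of $\alpha$, is precisely the first displayed relation. This step is pure bookkeeping.

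The bulk of the work, and the place I expect genuine friction, is the mixed triple $(a\otimes 1,b\otimes 1,c\otimes x)$, all of whose Jacobiator terms land in $A\otimes x$. Expanding the nested brackets through (\ref{eq-1ab}) produces three nested terms $\alpha(\alpha(a,b),c)$, $\alpha(a,\alpha(b,c))$, $\alpha(b,\alpha(a,c))$, which I would cancel outright using the Jacobi identity for $\alpha$ obtained above, together with products such as $\alpha(a,cD(b))$ and $\alpha(a,bcu)$ in which a product occupies a slot of $\alpha$. Expanding every such product by the contact identity (\ref{eq-abc}) for $\alpha$ and using the Leibniz rule $D(ab)=D(a)b+D(b)a$, I expect all terms in which $c$ occurs as an argument of $\alpha$ to cancel pairwise, so that the whole mixed Jacobiator collapses to $c$ times a two-variable expression $E(a,b)$; the relation $E(a,b)=0$ is then exactly the second displayed condition (the case $c=1$ recovering it, and a general $c$ being an automatic consequence). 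The converse is immediate: once $\alpha$ is contact and the two displayed relations hold, the Jacobiator vanishes on every basis triple, so (\ref{eq-1ab}) is a contact bracket.
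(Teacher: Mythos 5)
Your proof is correct and follows essentially the same route as the paper, whose own proof is just the one-line remark that the corollary ``amounts to writing the Jacobi identity for the bracket (\ref{eq-1ab})''; your reduction to basis triples, the term-by-term vanishing for triples with two factors from $A\otimes x$, and the identification of the two displayed conditions with the all-$\otimes 1$ triple and the mixed triple (the latter indeed collapsing to $c$ times the second condition) all check out. The only point worth flagging is that, as you correctly observe, the $A\otimes 1$ component of the all-$\otimes 1$ Jacobiator additionally forces $\alpha$ itself to satisfy the Jacobi identity --- a condition the corollary's statement leaves implicit.
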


\begin{proof}
Amounts to writing the Jacobi identity for the bracket (\ref{eq-1ab}), which is 
equivalent to the specified equalities.
\end{proof}

\begin{corollary}\label{cor-2}
Let $A$ be a commutative associative algebra, and 
$\liebrack_A \in \mathcal K^-(A)$. Then any bracket from 
$\mathcal K^-\big(A \otimes K[x]/(x^2)\big)$ extending the bracket $\liebrack_A$
on $A$, and the bracket $\varphi$ on $K[x]/(x^2)$, is of the form (\ref{eq-1ab})
such that $\alpha = \liebrack_A$, $\beta = 0$, and $u=1$.
\end{corollary}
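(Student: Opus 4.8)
The plan is to take the explicit parametrization of $\mathcal K^-\big(A \otimes K[x]/(x^2)\big)$ furnished by Proposition \ref{prop-x2} and read off the constraints imposed by the two extension requirements. Every bracket under consideration is already known to be of the form (\ref{eq-1ab}) for some $\alpha, \beta \in \mathcal K^-(A)$, $D \in \Der(A)$, and $u \in A$, so the task reduces to deciding which of these four parameters are pinned down by the demand that $\liebrack$ restrict to $\liebrack_A$ on the subalgebra $A \otimes 1$ and to $\varphi$ on the subalgebra $1 \otimes K[x]/(x^2)$.

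First I would impose extension of $\liebrack_A$. Restricting to $A \otimes 1$ means $[a \otimes 1, b \otimes 1] = \liebrack_A(a,b) \otimes 1$ for all $a, b \in A$. The first line of (\ref{eq-1ab}) reads $[a \otimes 1, b \otimes 1] = \alpha(a,b) \otimes 1 + \beta(a,b) \otimes x$, and since $1$ and $x$ are linearly independent in $K[x]/(x^2)$, comparing the two components yields at once $\alpha = \liebrack_A$ and $\beta = 0$.

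Next I would impose extension of $\varphi$, i.e.\ $[1 \otimes f, 1 \otimes g] = 1 \otimes \varphi(f,g)$ for all $f, g$. Because $\alpha, \beta$ are anticommutative (so $\alpha(1,1) = 0$) and $D(1) = 0$, the instances $[1 \otimes 1, 1 \otimes 1]$ and $[1 \otimes x, 1 \otimes x]$ vanish on both sides automatically, matching $\varphi(1,1) = \varphi(x,x) = 0$. The only informative case is $[1 \otimes 1, 1 \otimes x]$: the second line of (\ref{eq-1ab}) with $a = b = 1$, together with $\alpha(1,1) = 0$ and $D(1) = 0$, gives $u \otimes x$, which must equal $1 \otimes \varphi(1,x) = 1 \otimes x$, forcing $u = 1$. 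This exhausts the constraints, and conversely any bracket (\ref{eq-1ab}) with $\alpha = \liebrack_A$, $\beta = 0$, $u = 1$ visibly restricts correctly to both factors.

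There is no genuine obstacle here: the statement is a direct specialization of Proposition \ref{prop-x2}, and the only point deserving attention is the bookkeeping, namely verifying that the two extension conditions constrain exactly $\alpha, \beta, u$ while leaving the derivation $D$ completely free. Thus, within $\mathcal K^-$, the extensions of the prescribed pair form a family naturally parametrized by $\Der(A)$.
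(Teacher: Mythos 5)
Your proof is correct and is exactly the paper's argument: the paper likewise disposes of this corollary by substituting the two extension equalities $[a\otimes 1,b\otimes 1]=[a,b]_A\otimes 1$ and $[1\otimes f,1\otimes g]=1\otimes\varphi(f,g)$ into the parametrization (\ref{eq-1ab}) and comparing components. Your additional observation that $D$ remains a free parameter is a correct and harmless elaboration.
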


\begin{proof}
Amounts to substituting the equalities 
$[a \otimes 1, b \otimes 1] = [a,b]_A \otimes 1$ and 
$[1 \otimes f, 1 \otimes g] = 1 \otimes \varphi(f,g)$ in (\ref{eq-1ab}).
\end{proof}

Now we are ready to prove the main result of this note, answering in negative
the Mart\'inez--Zelmanov question.

\begin{theorem}\label{th-1}
There exist two commutative associative algebras $A$, $B$, a Poisson bracket 
$\liebrack_A$ on $A$, and a contact bracket $\liebrack_B$ on $B$, such that 
there is no contact bracket on $A \otimes B$ extending brackets $\liebrack_A$ 
and $\liebrack_B$.
\end{theorem}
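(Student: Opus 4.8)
The plan is to realise the counterexample on the $8$-dimensional algebra $K[x,y,z]/(x^2,y^2,z^2)$, already flagged as the target, by splitting it as a tensor product and feeding it into the rigidity results of this section. Concretely, I would take $B = K[x]/(x^2)$ equipped with the bracket $\varphi$ singled out in the proof of Proposition \ref{prop-x2}: since $\varphi(1,x) = x \neq 0$, it is a genuine contact bracket that is not Poisson. For the other factor I would take $A = K[y,z]/(y^2,z^2)$, so that $A \otimes B \cong K[x,y,z]/(x^2,y^2,z^2)$. The remaining task is to pin down a Poisson bracket $\liebrack_A$ on $A$ and to prove that no contact bracket on $A \otimes B$ simultaneously restricts to $\liebrack_A$ and to $\varphi$.

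First I would classify the Poisson brackets on $A$. Setting $p = \alpha(y,z)$ and imposing the Leibniz identity against the relations $y^2 = z^2 = 0$ forces $yp = zp = 0$ (here the characteristic being not $2$ is used), which pins $p$ down to a scalar multiple of $yz$; conversely every such $\alpha$ is readily seen to satisfy the Jacobi identity, as all its iterated brackets factor through $\alpha(yz,\cdot) = 0$. I would fix the nonzero choice $\alpha(y,z) = yz$ as $\liebrack_A$, noting that $\alpha(1,\cdot) = 0$ automatically for a Poisson bracket.

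Next I would invoke Corollary \ref{cor-2}: any bracket in $\mathcal K^-(A \otimes B)$ restricting to $\liebrack_A$ and $\varphi$ must have the shape (\ref{eq-1ab}) with $\alpha = \liebrack_A$, $\beta = 0$, $u = 1$, leaving only the derivation $D \in \Der(A)$ free to vary. By Corollary \ref{cor-1} such a bracket is contact exactly when the two displayed Jacobi conditions hold. With $\beta = 0$ the first condition is vacuous, and substituting $u = 1$ together with $\alpha(1,\cdot) = 0$ collapses the second to the single requirement
$$
D(\alpha(a,b)) - \alpha(D(a),b) - \alpha(a,D(b)) = \alpha(a,b) \quad\text{for all } a,b \in A,
$$
i.e.\ $\alpha$ must be a fixed point (eigenvector with eigenvalue $1$) of the natural action of $D$ on brackets.

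The crux, and the step I expect to be the main obstacle, is to show this equation admits no solution $D \in \Der(A)$, and this is where the nilpotent structure of $A$ is decisive. The relations $y^2 = z^2 = 0$ force $y\,D(y) = z\,D(z) = 0$, hence $D(y) \in yA$ and $D(z) \in zA$; writing $D(y) = a_1 y + a_3 yz$ and $D(z) = b_2 z + b_3 yz$ and evaluating the displayed identity at $(a,b) = (y,z)$, a short computation yields $D(yz) = (a_1+b_2)yz$, $\alpha(D(y),z) = a_1 yz$ and $\alpha(y,D(z)) = b_2 yz$, so that the left-hand side is identically $0$ for every $D$, whereas the right-hand side is $\alpha(y,z) = yz \neq 0$. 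Thus no derivation can satisfy the condition, no contact bracket extends both $\liebrack_A$ and $\varphi$, and the theorem follows.
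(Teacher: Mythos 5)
Your proposal is correct and follows essentially the same route as the paper's own proof: reduce via Corollaries \ref{cor-1} and \ref{cor-2} to the invariance equation $D(\alpha(a,b)) - \alpha(D(a),b) - \alpha(a,D(b)) = \alpha(a,b)$, then take $A = K[y,z]/(y^2,z^2)$ with $\alpha(y,z)=yz$ and check that every derivation of $A$ makes the left-hand side vanish on $(y,z)$ while the right-hand side is $yz \neq 0$. The only differences are cosmetic (renaming the variables of $A$) plus a small bonus classification of all Poisson brackets on $A$, which the paper does not need.
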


\begin{proof}
Let $B = K[x]/(x^2)$ and $\liebrack_B = \varphi$. Assume $\liebrack$ is a 
contact bracket on $A \otimes K[x]/(x^2)$, extending the brackets $\liebrack_A$
on $A$, and $\varphi$ on $K[x]/(x^2)$. By Corollaries \ref{cor-1} and 
\ref{cor-2}, 
\begin{alignat*}{3}
&[a \otimes 1, b \otimes 1] \>&=&\> [a,b]_A \otimes 1 \\
&[a \otimes 1, b \otimes x] \>&=&\> \big([a,b]_A + bD(a) + ab\big) \otimes x
\\
&[a \otimes x, b \otimes x] \>&=&\> 0 , 
\end{alignat*}
where $D \in \Der(A)$ is such that
\begin{multline*}
  D([a,b]_A) 
- [D(a),b]_A + [D(b),a]_A + [1,b]_A D(a) - [1,a]_A D(b)
\\
+ [1,a]_A b - [1,b]_A a - [a,b]_A + 2[1,b]_A a - 2[1,a]_A b
= 0
\end{multline*}
for any $a,b \in A$. If $\liebrack_A$ is a Poisson bracket, the last equality is
reduced to
\begin{equation}\label{eq-inv}
D([a,b]_A) - [D(a),b]_A + [D(b),a]_A = [a,b]_A .
\end{equation}

The left-hand side of this equality coincides with the standard action of $D$ on
the space of bilinear maps on $A$, so our task reduces to finding an algebra $A$
with a Poisson bracket $\liebrack_A$ not invariant with respect to this action 
for any derivation $D$ of $A$. There seems to be a plethora of such examples, 
one of the easiest is provided by the algebra $A = K[x,y]/(x^2,y^2)$ and the 
Poisson bracket $[x,y]_A = xy$.

Indeed, any derivation $D$ of $K[x,y]/(x^2,y^2)$ is of the form 
$f \frac{\dcobound}{\dcobound x} + g \frac{\dcobound}{\dcobound y}$, where 
$f \in Kx \oplus Kxy$ and $g \in Ky \oplus Kxy$, so substituting $x=a$ and $y=b$
in (\ref{eq-inv}) yields
\begin{equation}\label{eq-fg}
fy + gx - [f,y] + [g,x]
\end{equation}
at the left-hand side, and $xy$ at the right-hand side. But a quick check for 
all possibilities for $f,g$ shows that (\ref{eq-fg}) always vanishes.
\end{proof}

Our scheme allows to construct further examples providing a negative answer to
the Mart\'inez--Zelmanov question, and to its modifications, for example, when 
both brackets on the tensor factors are contact. This is left as an exercise to
the reader.

Further, more elaborated, examples quite possibly could be obtained by 
considering quotient of polynomial algebras by non-homogeneous ideals. Such 
quotients may possess quite involved Poisson brackets, see, for example, 
\cite{kubo} and references therein.

Finally, let us mention another series of examples. These examples are brackets
of the form
\begin{equation}\label{eq-t}
[a \otimes b, a^\prime \otimes b^\prime] = [a,a^\prime]_A \otimes bb^\prime +
                                           aa^\prime \otimes [b,b^\prime]_B ,
\end{equation}
where $a,a^\prime \in A$, $b,b^\prime \in B$. If $\liebrack_A$ and $\liebrack_B$
are Poisson brackets on $A$ and $B$ respectively, then this is again a Poisson 
bracket on $A \otimes B$, a classical construction known from the literature as
the tensor product of two Poisson brackets.

What happens if $\liebrack_A$, $\liebrack_B$ are contact brackets? It is 
straightforward to check that in this case the bracket (\ref{eq-t}) satisfies 
the condition (\ref{eq-abc}), so the question reduces to whether (\ref{eq-t}) 
satisfies the Jacobi identity or not. It turns out that this is no longer 
necessarily true if at least one of $\liebrack_A$, $\liebrack_B$ is a contact 
bracket\footnote{
Note that Remark 5.1.1(2) in \cite{agore-m}, which claims the contrary, is 
incorrect.
}. 
The corresponding examples were considered in an old interesting survey 
\cite{kostr-dzhu}.

In the examples considered in that survey, $A$ and $B$ are reduced polynomial 
algebras of the form \linebreak $K[x_1,\dots,x_n]/(x_1^p,\dots,x_n^p)$, defined
over a field of characteristic $p>0$, with corresponding brackets yielding 
simple Lie algebras of Cartan type of the series $W_n$, $H_{2n}$, $K_{2n+1}$ (or
algebras close to them). For example, it was observed in \cite[\S 3]{kostr-dzhu}
that the bracket (\ref{eq-t}) defined on $W_1 \otimes W_1$ satisfies the Jacobi
identity, while on $W_1 \otimes H_2$, $H_2 \otimes K_3$, and $K_3 \otimes K_3$, it does not. 
This still does not give a full answer to the Mart\'inez--Zelmanov question, as
it could happen that the brackets on $A$ and $B$ can be extended to 
$A \otimes B$ in a way different from (\ref{eq-t}). In fact, as briefly 
explained in the next section, this is always the case: any contact bracket on 
two (reduced) polynomial algebras could be extended to a contact bracket on 
their tensor product.

\section{Contact brackets that are extended to the tensor product}
\label{sec-pol}

Most of the content of this section is hardly new: it is either contained in the
literature, implicitly or explicitly, or can be obtained by immediate analogy with the known (Poisson) case; we omit almost all of the proofs. Still,
the final conclusion -- that any contact bracket on two polynomial algebras 
could be extended to a contact bracket on their tensor product -- seems to be 
not explicitly recorded in the literature, and provides a nice contrast with 
Theorem~\ref{th-1}.

When discussing brackets on polynomial and close to them algebras, it is 
convenient to adopt the following shorthand notation. For two linear operators 
$D, F: A \to A$ on an algebra $A$, its exterior product 
$D \wedge F: A \times A \to A$ is defined as
$$
(D \wedge F)(a,b) = D(a)F(b) - D(b)F(a) ,
$$
where $a,b \in A$. For example, using this notation, the bracket (\ref{eq-ad}) 
can be written as $D \wedge \id_A$, where $\id_A$ is the identity map on $A$. 
More generally, consider the bracket of the form
\begin{equation}\label{eq-k}
\liebrack = \sum_{i=1}^n (D_i \wedge F_i) + D \wedge \id_A ,
\end{equation}
where $D, D_1, \dots, D_n, F_1, \dots, F_n \in \Der(A)$. It is easy to check 
that each such bracket belongs to $\mathcal K^-(A)$. Let us call an associative
commutative algebra $A$ \emph{standard}, if, conversely, each bracket from 
$\mathcal K^-(A)$ is of the form (\ref{eq-k}). In particular, 
Lemma~\ref{prop-der}(i) implies that an univariate-like algebra is standard. We
also have

\begin{proposition}\label{prop-p}
The polynomial algebra $K[x_1,\dots,x_n]$, and the reduced polynomial algebra
\linebreak
$K[x_1,\dots,x_n]/(x_1^p,\dots,x_n^p)$ are standard.
\end{proposition}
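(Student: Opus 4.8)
The plan is to reduce everything to the two structural facts that make these algebras special. The first is that $\Der(A)$ is a free $A$-module of rank $n$, with basis the partial derivatives $\partial_1 = \frac{\dcobound}{\dcobound x_1}, \dots, \partial_n = \frac{\dcobound}{\dcobound x_n}$: for $A = K[x_1,\dots,x_n]$ this is classical, and for the reduced algebra $K[x_1,\dots,x_n]/(x_1^p,\dots,x_n^p)$ in characteristic $p$ it is the well-known description of the derivation algebra underlying the Jacobson--Witt algebra $W_n$, where again $\Der(A) = \bigoplus_{j} A\partial_j$. The second is the elementary observation that $\GDer(A) = \Der(A) \oplus A$, with the splitting $D \mapsto (D - R_{D(1)}, D(1))$: one checks straight from (\ref{eq-gd}) that $D - R_{D(1)}$ is an ordinary derivation, and that the sum is direct since $R_u \in \Der(A)$ forces $u=0$.

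I would start from an arbitrary $\liebrack \in \mathcal K^-(A)$. As already used in the proof of Lemma \ref{prop-der}, the condition (\ref{eq-yoyo}) says exactly that for each fixed $c$ the map $L_c : a \mapsto [a,c]$ is a generalized derivation. Applying the two facts above, I write $L_c = \Delta_c + R_{[1,c]}$ with $\Delta_c \in \Der(A)$ depending linearly on $c$, and expand $\Delta_c = \sum_{j=1}^n F_j(c)\partial_j$ in the free basis. This gives
$$[a,c] = \sum_{j=1}^n \partial_j(a)F_j(c) + a[1,c],$$
where the coefficient $F_j(c) = [x_j,c] - x_j[1,c]$ is a linear function of $c$, and $[1,\cdot\,] = -\delta$ with $\delta : a \mapsto [a,1]$ a derivation.

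The step where the real content sits is promoting the coefficient maps $F_j$ from mere linear maps to generalized derivations; I expect this to be the only genuinely fiddly point, mainly because of the signs coming from anticommutativity. It rests on the mirror form of the contact identity: anticommutativity together with (\ref{eq-yoyo}) yields $[a,cd] = [a,c]d + [a,d]c - \delta(a)cd$, so that $a \mapsto [a,\cdot\,]$ is a generalized derivation in the second slot. A direct computation from this identity shows that each $F_j$ satisfies (\ref{eq-gd}), whence $F_j = G_j + R_{w_j}$ with $G_j \in \Der(A)$ and $w_j = F_j(1) = [x_j,1] \in A$.

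Once the $F_j$ are split, the conclusion is bookkeeping that exploits the $A$-module structure of $\Der(A)$. Since $\liebrack$ is already anticommutative, it equals its own antisymmetrization, so
$$[a,c] = \frac12\sum_{j=1}^n \big(\partial_j(a)F_j(c) - \partial_j(c)F_j(a)\big) + \frac12\big(a[1,c] - c[1,a]\big).$$
Substituting $F_j = G_j + R_{w_j}$ turns the $j$-th summand into $(\partial_j \wedge G_j)(a,c) + w_j(\partial_j \wedge \id_A)(a,c)$, and since $w_j\partial_j \in \Der(A)$ the second piece is $((w_j\partial_j) \wedge \id_A)(a,c)$; the final term is $\frac12(\delta \wedge \id_A)(a,c)$ because $[1,\cdot\,] = -\delta$. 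Collecting all the $\wedge\,\id_A$ contributions into the single derivation $D = \frac12\big(\sum_{j} w_j\partial_j + \delta\big)$ and setting $D_j = \frac12\partial_j \in \Der(A)$, I obtain $\liebrack = \sum_{j=1}^n (D_j \wedge G_j) + D \wedge \id_A$, which is exactly the form (\ref{eq-k}); hence $A$ is standard. The argument applies verbatim to the reduced polynomial algebra, the only change being the source cited for the freeness of $\Der(A)$.
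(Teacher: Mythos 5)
Your argument is correct, but it takes a genuinely different route from the paper's. The paper proves Proposition~\ref{prop-p} by writing down the candidate coefficients explicitly --- $f_i = [1,x_i]$ and $f_{ij} = [x_i,x_j] + f_ix_j - f_jx_i$ --- and then running an induction on the total degree of the monomials $a,b$ in $[a,b]$, in direct analogy with the classical Poisson case (\cite[Proposition 1.6]{poisson-book}). You instead globalize the strategy of Lemma~\ref{prop-der}: fix one argument, observe that $a \mapsto [a,c]$ is a generalized derivation, split $\GDer(A) = \Der(A) \oplus A$ (your verification that this splitting is automatic for any unital commutative algebra is correct), expand in the free basis $\partial_1,\dots,\partial_n$ of $\Der(A)$, and then use the mirror identity $[a,cd] = [a,c]d + [a,d]c - [a,1]cd$ to promote the coefficient maps $F_j(c) = [x_j,c] - x_j[1,c]$ to generalized derivations in $c$ as well; the final antisymmetrization and the absorption of the $R_{w_j}$ parts into $D \wedge \id_A$ are routine and your bookkeeping (including the factors $\tfrac12$, which is where characteristic $\ne 2$ enters) checks out. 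What your approach buys is a structural, induction-free proof that visibly extends the univariate-like case and works for any unital commutative algebra whose derivation module is free on commuting ``partials'' admitting dual coordinates $\partial_i(x_j) = \delta_{ij}$; what the paper's approach buys is brevity and a direct explicit formula for the coefficients $f_i, f_{ij}$ in terms of the bracket's values on generators, which is the form actually quoted in the statement of the paper's proof. Both yield the same normal form, so there is no gap.
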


\begin{proof}
More precisely, any bracket from $\mathcal K^-(K[x_1,\dots,x_n])$ is of the form
\begin{equation*}
\liebrack = 
\sum_{1 \le i < j \le n} \Big(f_{ij} 
\frac{\dcobound}{\dcobound x_i} \wedge \frac{\dcobound}{\dcobound x_j} \Big)
+ \Big(\sum_{i=1}^n f_i \frac{\dcobound}{\dcobound x_i}\Big) \wedge 
  \id_{K[x_1,\dots,x_n]}
\end{equation*}
for some elements $f_{ij}, f_i \in K[x_1,\dots,x_n]$. An elementary proof goes 
similarly to the classical Poisson case (where all $f_i$'s vanish); see, for 
example, \cite[Proposition 1.6]{poisson-book}. We set $f_i = [1,x_i]$ and 
$f_{ij} = [x_i,x_j] + f_ix_j - f_jx_i$, and then proceed by induction on the sum of degrees of the monomials $a, b$ in the expression $[a,b]$.

The reasoning in the case of reduced polynomial algebra is the same.
\end{proof}

Necessary and sufficient conditions for the bracket (\ref{eq-k}) to be a 
contact bracket, are
\begin{align}\label{eq-sch}
\begin{split}
&\llbracket \mathcal D, D \rrbracket \>=\> 0  \\
&\llbracket \mathcal D, \mathcal D \rrbracket \>=\> 2\, \mathcal D \wedge D ,
\end{split}
\end{align}
where $\mathcal D = \sum_{i=1}^n D_i \wedge F_i$, and 
$\llbracket \cdot,\cdot \rrbracket$ is the Schouten bracket 
(\cite[Erratum]{kirillov}, see also \cite[Lemma 3.7]{ck}; note in passing that
the conditions of Corollary \ref{cor-1} are reminiscent of the conditions 
(\ref{eq-sch}), as the left-hand sides of equalities there are reminiscent of 
the respective Schouten brackets, but since the algebra $A$ there is, generally,
not standard, the conditions of Corollary \ref{cor-1} look a bit more involved).

Now we can easily establish

\begin{theorem}\label{th-2}
Let $A, B$ be two standard commutative associative algebras, $\liebrack_A$ a
contact bracket on $A$, and $\liebrack_B$ a contact bracket on $B$. Then there
exists a contact bracket on $A \otimes B$ extending brackets $\liebrack_A$ and
$\liebrack_B$.
\end{theorem}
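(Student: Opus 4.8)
The plan is to extend the contact brackets on the standard algebras $A$ and $B$ by writing each of them explicitly in the canonical form (\ref{eq-k}) guaranteed by the standardness hypothesis, and then assembling a bracket on $A \otimes B$ in the same canonical form, using derivations of $A \otimes B$ built from the derivations on the factors. Concretely, since $A$ is standard, write $\liebrack_A = \mathcal{D}_A + D_A \wedge \id_A$ where $\mathcal{D}_A = \sum_i D_i \wedge F_i$ with $D_i, F_i, D_A \in \Der(A)$, and similarly $\liebrack_B = \mathcal{D}_B + D_B \wedge \id_B$ on $B$. First I would recall that any derivation $D$ of $A$ lifts to a derivation $D \otimes \id_B$ of $A \otimes B$, and dually for $B$; these liftings let me transport the constituent operators of both brackets into $\Der(A \otimes B)$, so that a bracket of the form (\ref{eq-k}) on $A \otimes B$ is available as the ambient object in which to search for the extension.

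The key design choice is which bracket on $A \otimes B$ to take. The natural first guess is the additive combination $\mathcal{D}_A \otimes \id + \id \otimes \mathcal{D}_B$ together with the $\wedge \id_{A \otimes B}$ term coming from $D_A \otimes \id_B + \id_A \otimes D_B$; this is essentially the bracket (\ref{eq-t}) recast in exterior-product notation, and it does restrict correctly to $\liebrack_A$ and $\liebrack_B$ on the factors. The step I expect to be the main obstacle is precisely the one flagged in the discussion preceding this theorem: (\ref{eq-t}) automatically satisfies the Leibniz-type identity (\ref{eq-abc}) but \emph{need not} satisfy the Jacobi identity when the factor brackets are genuinely contact rather than Poisson. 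So the heart of the proof is to exhibit a \emph{correcting term} that repairs the Jacobi identity while leaving the restrictions to $A$ and $B$ intact. The guiding principle is the Schouten-bracket characterization (\ref{eq-sch}): the failure of Jacobi is measured by the deviation of $\llbracket \mathcal{D}, \mathcal{D} \rrbracket$ from $2\,\mathcal{D} \wedge D$ and of $\llbracket \mathcal{D}, D \rrbracket$ from $0$, and I would compute these deviations for the naive tensor bracket in terms of the corresponding Schouten data on the factors.

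The plan for overcoming the obstacle is to add to the derivation $D$ appearing in the $\wedge \id_{A \otimes B}$ slot a suitable extra term, and to adjust $\mathcal{D}$ by a cross-term involving both $D_A$ and $D_B$, chosen so that the two Schouten-bracket conditions (\ref{eq-sch}) are met on $A \otimes B$. Because the Schouten bracket is a graded derivation with respect to the exterior product, the mixed Schouten brackets of liftings from the two factors decompose into pieces that separately involve the factor data; I would use the bilinearity and the Leibniz rule for $\llbracket \cdot, \cdot \rrbracket$ to reduce the required identities to the validity of (\ref{eq-sch}) on $A$ and on $B$ separately, which hold by hypothesis since $\liebrack_A$ and $\liebrack_B$ are contact. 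The cross-terms, coming from the interaction of $D_A \otimes \id_B$ with $\id_A \otimes D_B$, are the delicate part; I anticipate that the correct extension replaces the symmetric sum by one in which the $\id$-slot derivation is $D_A \otimes \id_B + \id_A \otimes D_B$ and the remaining cross-contribution is absorbed into an additional $\wedge$-term, verifying by direct Schouten computation that the residual obstruction vanishes. Once such a correcting term is found and the two identities in (\ref{eq-sch}) are checked on $A \otimes B$, the resulting bracket lies in $\mathcal{K}^-(A \otimes B)$, is contact by (\ref{eq-sch}), and restricts to the given brackets on the factors, completing the proof.
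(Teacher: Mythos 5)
Your overall framework coincides with the paper's: use standardness to write both brackets in the canonical form (\ref{eq-k}), lift the constituent derivations to $A\otimes B$, and verify the contact property through the Schouten-bracket conditions (\ref{eq-sch}). But there is a genuine gap at the decisive step. You correctly observe that the naive bracket (\ref{eq-t}) satisfies (\ref{eq-abc}) but can fail the Jacobi identity, and that the failure is concentrated in the cross-terms $2\,\mathcal D_A^{\,l}\wedge(\id_A\otimes F)+2\,\mathcal D_B^{\,l}\wedge(D\otimes\id_B)$ on the right-hand side of the second condition in (\ref{eq-sch}) (the superscript $l$ denoting the lift of the bivector part to $A\otimes B$); these are wedge products, not Schouten brackets, of lifts from the two factors, so they are \emph{not} killed by ``bilinearity and the Leibniz rule'' for $\llbracket\cdot,\cdot\rrbracket$. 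However, you never produce the correcting term: the argument ends with ``I anticipate that\dots'' and ``once such a correcting term is found\dots''. The existence of that term is the entire content of the theorem. The paper, by contrast, writes down one explicit bracket on $A\otimes B$ built from the lifted operators and reduces the theorem to a direct verification of (\ref{eq-sch}) for it.

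To see that the missing step is substantive rather than routine, take $A=K[t]/(t^p)$ with the $W_1$-bracket $\partial_t\wedge\id_A$ and $B=K[x,y]/(x^p,y^p)$ with the Hamiltonian bracket $\partial_x\wedge\partial_y$ (both algebras standard, both brackets contact); this is exactly the pair $W_1\otimes H_2$ for which the paper notes that (\ref{eq-t}) violates Jacobi. A correcting bivector $c$ must satisfy $\llbracket\partial_x\wedge\partial_y+c,\,\partial_x\wedge\partial_y+c\rrbracket=2(\partial_x\wedge\partial_y+c)\wedge\partial_t$ and $\llbracket c,\partial_t\rrbracket=0$, and must vanish upon restriction to either factor; one checks that a suitable $c$ is proportional to $(\id_A\otimes x\partial_x)\wedge(\partial_t\otimes\id_B)$. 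Note that this $c$ is not assembled from $D,D_i,D_i',F,F_i,F_i'$ by lifts and wedges alone --- it requires multiplying a constituent derivation by a coordinate --- so the ``additional $\wedge$-term absorbing the cross-contribution'' that you anticipate must actually be exhibited in general and then verified against both identities in (\ref{eq-sch}); without that, the proof of Theorem~\ref{th-2} is incomplete.
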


\begin{proof}
We have
\begin{gather*}
\liebrack_A = \sum_{i=1}^n (D_i \wedge D_i^\prime) + D \wedge \id_A \\ 
\liebrack_B = \sum_{i=1}^n (F_i \wedge F_i^\prime) + F \wedge \id_B 
\end{gather*}
for some $D,D_i,D_i^\prime \in \Der(A)$ and $F,F_i,F_i^\prime \in \Der(B)$.

Let us define the bracket $\liebrack$ on $A\otimes B$ as
$$
\liebrack = \sum_{i=1}^n \Big(
  (D_i \otimes \id_B) \wedge (D_i^\prime \otimes \id_B)
+ (\id_A \otimes F_i) \wedge (\id_A \otimes F_i^\prime)
\Big) 
+ (D \wedge \id_B) \otimes m_B + m_A \otimes (\id_A \wedge F) ,
$$
where $m_A: A \times A \to A$ and $m_B: B \times B \to B$ are multiplications in
algebras $A$ and $B$, respectively. It is a matter of routine verification, 
using the equalities (\ref{eq-sch}) for the brackets $\liebrack_A$ and 
$\liebrack_B$, to establish the same equalities for the bracket $\liebrack$.
\end{proof}

Proposition \ref{prop-p} implies that the conclusion of Theorem \ref{th-2} is 
applicable to the tensor product of two (reduced) polynomial algebras.

\section*{Acknowledgements}

Thanks are due to Ivan Kaygorodov for pointing out some relevant literature. 
GAP \cite{gap} was used to check some of the computations performed in this 
note.

\end{document}